\documentclass[10pt]{amsart}

\usepackage{amscd,amsmath,amsfonts,amsthm,amssymb}
\usepackage{enumitem}
\usepackage{graphicx}
\usepackage{xcolor}
\usepackage[all]{xy}
\usepackage[top=25truemm,bottom=25truemm,left=15truemm,right=15truemm]{geometry}

\usepackage[colorlinks=true,citecolor=blue,linkcolor=magenta,urlcolor=green,bookmarksnumbered=true,pdfborder={1 0 0},bookmarkstype=toc]{hyperref}
\usepackage[nameinlink,noabbrev]{cleveref}

\newcommand\bcdot{\ensuremath{%
  \mathchoice%
   {\mskip\thinmuskip\lower0.2ex\hbox{\scalebox{1.5}{$\cdot$}}\mskip\thinmuskip}}%
   {\mskip\thinmuskip\lower0.2ex\hbox{\scalebox{1.5}{$\cdot$}}\mskip\thinmuskip}%
   {\lower0.3ex\hbox{\scalebox{1.2}{$\cdot$}}}%
   {\lower0.3ex\hbox{\scalebox{1.2}{$\cdot$}}}%
   }

\theoremstyle{definition}
\newtheorem{thm}{Theorem}[section]
\newtheorem{dfn}[thm]{Definition}
\newtheorem{lem}[thm]{Lemma}
\newtheorem{pro}[thm]{Proposition}

\newtheorem{rmk}[thm]{Remark}
\newtheorem{nota}[thm]{Notation}
\newtheorem{const}[thm]{Construction}

\crefname{thm}{Theorem}{Theorems}
\crefname{lem}{Lemma}{Lemmas}
\crefname{pro}{Proposition}{Propositions}
\crefname{cor}{Corollary}{Corollaries}
\crefname{dfn}{Definition}{Definitions}
\crefname{exa}{Example}{Examples}
\crefname{rmk}{Remark}{Remarks}
\crefname{nota}{Notation}{Notations}
\crefname{const}{Construction}{Constructions}
\crefname{section}{Section}{Sections}

\newcommand{\exend}{\unskip\nobreak\hfill$\blacklozenge$}

\newcommand{\A}{\mathbb{A}}
\newcommand{\C}{\mathbb{C}}
\newcommand{\D}{\mathcal{D}}

\renewcommand{\L}{\mathbf{L}}
\newcommand{\M}{\mathcal{M}}

\renewcommand{\O}{\mathcal{O}}

\newcommand{\R}{\mathbf{R}}
\renewcommand{\S}{\mathcal{S}}
\newcommand{\Z}{\mathbb{Z}}

\renewcommand{\phi}{\varphi}
\renewcommand{\epsilon}{\varepsilon}

\newcommand{\Hom}{\mathop{\mathrm{Hom}}\nolimits}
\newcommand{\iHom}{\underline{\mathop{\mathrm{Hom}}\nolimits}}

\newcommand{\im}{\mathrm{im}}
\newcommand{\coim}{\mathrm{coim}}
\newcommand{\fib}{\mathrm{fib}}
\newcommand{\cofib}{\mathrm{cofib}}

\newcommand{\ilim}[1][]{\mathop{\varinjlim}\limits_{#1}}
\newcommand{\plim}[1][]{\mathop{\varprojlim}\limits_{#1}}
\renewcommand{\lim}[1][]{\mathop{\mathrm{lim}}\limits_{#1}}
\newcommand{\colim}[1][]{\mathop{\mathrm{colim}}\limits_{#1}}

\newcommand{\op}{\text{op}}

\newcommand{\an}{\text{an}}

\newcommand{\alg}{\text{alg}}
\newcommand{\ad}{\text{ad}}

\newcommand{\Mod}{\mathrm{Mod}}
\newcommand{\BMod}{\mathrm{BMod}}

\newcommand{\Ind}{\mathrm{Ind}}
\newcommand{\Idem}{\mathrm{Idem}}

\newcommand{\Spec}{\mathrm{Spec}}
\newcommand{\Spa}{\mathrm{Spa}}

\begin{document}

\title[Analytification for Complex Geometry Revisited]{Analytification for Complex Geometry Revisited}

\author[Y. Yamada]{Yuto Yamada}
\address{Department of Mathematics, Institute of Science Tokyo, 2-12-1 Ookayama, Meguro, Tokyo 152-8551}
\email{yamada.y.f243@m.isct.ac.jp}
\begin{abstract}
In this paper, we provide a new framework to interpret complex geometry, inspired by Bambozzi--Chiarellotto---Vanni's work on tempered cohomology. We define several ind-Banach $\C$-algebras of overconvergent and holomorphic power series, and verify some desirable properties to endow Berkovich spaces with “analytic" structures in analogy with Clausen--Scholze's condensed formalism. As an application, we obtain an abstract GAGA-type comparison in this setting.
\end{abstract}

\keywords{Analytic geometry, Banach rings, Complex geometry}

\subjclass[2020]{Primary 32K05; Secondary 14A30, 46M15}
\date{\today}

\maketitle

\tableofcontents

\section{Introduction}

\subsection*{Historical background}

In the 1930s, de Rham defined the \emph{analytic} de Rham cohomology of complex manifolds via differential forms, and compared with the singular cohomology giving many topological invariants  ({\cite{dR}}). Building on this, in the 1950s, Serre proved an important relationship between the analytic and algebraic viewpoints ({\cite{S}}), which is known as GAGA. In the 1960s, Grothendieck defined the \emph{algebraic} de Rham cohomology of algebraic varieties over an arbitrary field (not only over $\C$), and proved a comparison between it and the analytic de Rham cohomology above via analytification as in {\cite{S}}. These classical works show that one can move flexibly between analytic and algebraic techniques.

On the other hand, the theory of Banach algebras (or the Gelfand--Naimark theory of $C^\ast$-algebras) provided a bridge between functional analysis and geometry in the 1940s ({\cite{Ge}} and {\cite{GN}}). In the 1990s, Berkovich developed the theory of \emph{Berkovich spaces} (i.e., the spaces of multiplicative seminorms), which is based on not only “discrete" rings, but also Banach rings ({\cite{Be}}). This theory gives richer (topological) information than the classical spaces (i.e., spectral spaces). This includes, as an important special case, the theory of \emph{rigid analytic spaces} developed by Tate ({\cite{T}}).

In the theory of ind-Banach rings or bornological rings, Bambozzi, Ben-Bassat and some collaborators recently provided a framework where both Archimedean and non-Archimedean worlds can be treated in a general categorical setting (for example, by \cite{BBM}).

More recently, Clausen--Scholze introduced the notions of \emph{condensed mathematics and analytic rings/stacks}, which have the potential to unify some theories such as (non-)Archimedean theory, complex geometry, topological theory and so on. In particular, they allow us to re-interpret complex geometry in {\cite{CC}} via \emph{liquid} structures. The idea was summarized as follows: to equip the underlying topological spaces of complex manifolds with “analytic" (liquid) structures via \emph{categorified locales}, which were originally developed by Balmer--Krause--Stevenson ({\cite{BKS}}). Also, on the non-Archimedean side, Bambozzi--Chiarellotto--Vanni developed a \emph{“tempered”} analytic framework, which provides a new comparison for crystalline cohomology. They also used the geometry of categorified locales over ind-Banach non-archimedean modules as above.

\subsection*{Results}

First, we will define the ring $\C\{|T|\leq1\}$ which corresponds to the closed unit disc in the framework of ind-Banach $\C$-modules. Also, we will construct other rings $\C\{|T|\geq1\},\C\{|T|<1\}$, and prove that they satisfy some axiomatic properties. As a simple consequence, we prove a type of GAGA axiomatically:

\begin{thm}[\cref{thm:GAGA}]
For a proper variety $X$ over $\C$, we obtain an equivalence $C^\an(X)\simeq C^\alg(X)$.
\end{thm}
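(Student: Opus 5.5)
The plan is to follow the Clausen--Scholze strategy for the ``abstract GAGA'' in the liquid/categorified-locale setting. Here $C^\alg(X)$ is the category of (quasi-coherent) modules on $X$, and $C^\an(X)$ is the analytic category obtained by base change along $\C[T]\to\C\{|T|\leq1\}$ together with the idempotent algebras $\C\{|T|<1\}$ and $\C\{|T|\geq1\}$. I will use the following axioms established earlier for these rings:
\begin{enumerate}[label=(\roman*)]
\item $\C\{|T|\leq1\}\otimes\C\{|T|\geq1\}\simeq\C\{|T|=1\}$, and the Mayer--Vietoris (Beauville--Laszlo type) square for $\mathbb{P}^1$ built from the two closed discs is a pullback of module categories.
\item $\C\{|T|<1\}$ is an idempotent $\C[T]$-algebra (in fact a $\C\{|T|\le 1\}$-algebra), whose complementary open localizes away from the disc.
\item Base change from $\C[T]$ to $\C\{|T|\leq1\}$ is compatible with the categorified-locale (Zariski) structure, so that $C^\an$ is defined by gluing.
\end{enumerate}

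\textbf{Step 1: reduce to $\mathbb{P}^n$.} Both $C^\alg$ and $C^\an$ satisfy descent for Zariski covers and are functorial for pushforward along proper maps. I will therefore use Chow's lemma together with noetherian induction, or alternatively the fact that $\mathrm{Perf}$/$D_{qc}$ of a proper $X$ is generated via proper pushforward from projective spaces. Both routes reduce the comparison to the case $X=\mathbb{P}^n$. This needs that the comparison functor $C^\alg(X)\to C^\an(X)$ commutes with proper pushforward, i.e.\ a base change statement. That statement should follow from the axiom that analytification is a localization given by tensoring with idempotent algebras.

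\textbf{Step 2: the case $\mathbb{P}^1$, then $\mathbb{P}^n$.} For $\mathbb{P}^1$ I will cover it analytically by the two closed discs $\{|T|\leq1\}$ and $\{|T|\geq1\}$. By the gluing axiom, $C^\an(\mathbb{P}^1)$ is the fiber product of the module categories over $\C\{|T|\leq1\}$ and $\C\{|T|\geq1\}$ over $\C\{|T|=1\}$.

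The comparison then reduces to the key computation
\[
\C\{|T|\leq1\}\times_{\C\{|T|=1\}}\C\{|T^{-1}|\leq1\}\simeq\C
\]
with vanishing higher cohomology, together with the same statement for twists $\O(k)$. This is the Laurent-series decomposition: overconvergent series on the annulus split as nonnegative plus negative parts. Since the $\O(k)$ generate both sides (Beilinson/Serre), fully faithfulness on generators gives the equivalence.

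For $\mathbb{P}^n$ I will either iterate the argument with polydisc products, using the Künneth property $\C\{|T_1|\le1\}\otimes\C\{|T_2|\le1\}\simeq\C\{|T_1|,|T_2|\le1\}$, which should be among the verified properties, or apply Beilinson's generation by $\O,\dots,\O(n)$ and compute $\Hom$ via Čech complexes on the standard polydisc cover.

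\textbf{Main obstacle.} I expect the main obstacle to be the essential surjectivity and generation on the analytic side, namely showing that $C^\an(\mathbb{P}^n)$ is generated by the analytifications of the $\O(k)$. Fully faithfulness is a cohomology computation; generation requires knowing that analytic modules on a disc are controlled by the idempotent algebra structure. I would first try to deduce generation formally: $C^\an(X)$ is by construction a localization of $C^\alg(X)\otimes$(ind-Banach modules) cut out by idempotent algebras. Properness makes the ``boundary at infinity'' idempotent $\C\{|T|>1\}$-type parts glue to the whole space, so the localization is trivial, i.e.\ the relevant idempotent is the unit. This is exactly where properness enters.
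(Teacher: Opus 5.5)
There is a genuine gap, and it is exactly where properness has to enter. First, the two sides in this paper are not ``quasi-coherent sheaves versus base change to $\C\{|T|\leq1\}$''. The category $C^\alg(X)$ is glued from $\Mod_A(\widehat{\D}(\C))$, so it already has ind-Banach coefficients; with classical $D_{qc}(X)$ the statement would fail for $X=\Spec\C$. The category $C^\an(X)$ is the restriction of $C^\alg(X)$ to the open sublocale $\S^\an(X)$, glued from the $\S(A,A)=\bigcap_i\S(A,f_i)$, on which every function lies in $(\A^1)^\an=\bigcup_{r>0}\{|T|<r\}$. Hence $C^\alg(X)\to C^\an(X)$ is a Verdier localization and is automatically essentially surjective, so the ``main obstacle'' you name is not one. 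The theorem is equivalent to $\S^\an(X)=\S(X)$.

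With this in view, your Step 1 is circular. For proper $\pi\colon X'\to X$, open base change at best gives $j^\ast\pi_\ast\simeq\pi'_\ast\iota^\ast$, where $\iota^\ast$ restricts to $\pi^{-1}(\S^\an(X))$. Identifying $\pi^{-1}(\S^\an(X))$ with $\S^\an(X')$ does not follow formally from analytification being a localization. Indeed, for $X=\Spec\C$, where $\S^\an=\S$ by the ``constants are analytic'' axiom, that identification \emph{is} the theorem for $X'$. So the Chow/noetherian-induction reduction presupposes what it is meant to prove.

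The missing idea is the mechanism by which properness forces $\S^\an(X)=\S(X)$. The paper obtains it for all proper $X$ at once, with no reduction to $\mathbb{P}^n$ and no cohomology. It checks that $(\C,\widehat{\D}(\C),\bigcup_{r>0}\{|T|<r\})$ is a GAGA setup, via the relations (1)--(6) of the proposition preceding the theorem, deduced from the idempotency, complement and union lemmas as in Clausen--Scholze's Proposition 5.7. It then applies the abstract GAGA theorem: $\S(A,A^+)$ is the preimage of $\Spa(A,A^+)'\subset\Spa(A,R)'$. This gives an open immersion $(X^{\ad},C^\an(X))\hookrightarrow(X^{\ad/R},C^\alg(X))$, which is an isomorphism for proper $X$ by the valuative criterion. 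Your last paragraph gestures at this conclusion but supplies no mechanism.

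Your Step 2 is also either superfluous or insufficient. The union lemma, applied on both charts, already says the closed discs $\{|T|\leq1\}$ and $\{|T^{-1}|\leq1\}$ cover all of $\S(\mathbb{P}^1)$. Both lie in the analytic locus, so this settles $\mathbb{P}^1$ immediately. If instead you only knew that they cover the analytic locus, full faithfulness on the objects $\O(k)\otimes M$ would not propagate to an equivalence. The reason is that $j_\ast=\iHom(\fib(1\to A),-)$ need not preserve colimits, so these objects need not be compact in $C^\an$.
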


\subsection*{Structure of this paper}

In \cref{section:Categorical Preliminaries}, we will recall techniques from the theory of locales to deal with a large $\infty$-category $\mathrm{Sym}$ according to \cite[Lecture V, V\hspace{-1pt}I\hspace{-1pt}I]{CC}. Also, we will define the notion of categorical locales to equip Berkovich spaces with a kind of “complex structure".

In \cref{section:Recollection on (ind-)Banach algebras}, we will review the theory of (ind-)Banach modules/algebras, and some categorical properties of the (derived) category of ind-Banach modules.

In \cref{section:The Analytic Line}, we will define some rings which are analogues of holomorphic functions in the theory of complex analysis. Also, we will prove some properties (for example, idempotency) by explicit calculations to use the framework of \cref{section:Categorical Preliminaries}. As an application of this framework, we prove “abstract" GAGA by using a categorical formalism in \cref{section:Abstract GAGA}.

In \cref{section:Abstract GAGA}, we will summarize an axiomatic approach for the so-called GAGA according to {\cite[Lecture  V\hspace{-1pt}I,  V\hspace{-1pt}I\hspace{-1pt}I]{CC}}.

\subsection*{Notation}

In this paper, we consider the complex number field $\C$ as a Banach field with the usual Euclidean norm $|-|_\C$. Basically, we work in (stable) $\infty$-categories. 

\subsection*{Acknowledgements}

The author would like to thank Makoto Enokizono for helpful discussions on ind-Banach algebras. This work was supported by JST SPRING, Japan Grant Number JPMJSP2180.

\section{Categorical Preliminaries}\label{section:Categorical Preliminaries}

In this section, we review the notion of \cite[Lecture V, V\hspace{-1pt}I\hspace{-1pt}I]{CC}.

\begin{nota}
We fix a presentably closed symmetric monoidal stable $\infty$-category $C$, and let $\Idem(C)$ denote the set of idempotent commutative algebra objects (i.e., algebra objects with a map $1\to A$ which induces an isomorphism $A\simeq 1\otimes A\xrightarrow{\sim}A\otimes A$) of $C$. For any idempotent commutative algebra object $A$ of $C$, we can define the $\infty$-category of $A$-modules as follows:
\[
\Mod_A(C):=\{X\in C\mid X\otimes A\simeq X\}.
\]
Note that some (higher) coherence conditions (for $A$-module structure) can be checked by this datum $X\otimes A\simeq X$ only.
\exend
\end{nota}

\begin{const}[locales]
We can define a poset structure on $\Idem(C)$ as $A\leq A'$ if and only if there is a map $A'\to A$ which commutes with their structure maps by {\cite[Construction 5.2]{CC}}. A \emph{locale $\S(C)$ (associated with $C$)} is defined by the following data (infinite unions and intersections):
\begin{enumerate}
    \item $Z\cap Z'$ corresponds to $A\otimes A'$.
    \item $Z\subset Z'$ if and only if $A\otimes A'=A$.
    \item $Z\cup Z'=Z\sqcup_{Z\cap Z'}Z'$ corresponds to the fiber of $A\oplus A'\to A\otimes A'$ (equivalent to $A\times_{A\otimes A'}A'$).
    \item $\bigcap_iZ_i$ corresponds to $\colim[i]A_i$.
\end{enumerate}
where each closed subset $Z_{(i)}^{(')}$ of $\S(C)$ denotes the corresponding idempotent commutative algebra object $A_{(i)}^{(')}$. Indeed, this construction defines locales (i.e., satisfies the distribution law) by {\cite[Proposition 5.3]{CC}} (see \cite{A} for more discussion).
\exend
\end{const}

\begin{dfn}[open/closed inclusions]\label{dfn:open/closed inclusions}
We let $\mathrm{Sym}$ denote the (large) $\infty$-category of closed symmetric monoidal stable $\infty$-categories with colimits. Let $f^\ast:D\to E$ be a map of $\mathrm{Sym}^\op$.
\begin{enumerate}
    \item The map $f^\ast$ is a \emph{closed inclusion} if it admits a colimit-preserving fully faithful right adjoint functor $f_\ast:E\to D$ such that $f_\ast$ satisfies \emph{the projection formula}:
    \[
    d\otimes f_\ast e\xrightarrow{\sim}f_\ast(f^\ast d\otimes e)\quad\text{for all $d\in D$ and $e\in E$}.
    \]
    Equivalently, if there exists an idempotent algebra $A$ ($=f_\ast 1_E$) of $\Idem(D)$ such that $1_E\xrightarrow{\sim}f^\ast A$ and $\Mod_A(D)\xrightarrow{\sim}E$ (see also {\cite[Lemma 6.4(1) \& Proposition 6.5(1)]{CC}}).
    \item The map $f^\ast$ is an \emph{open inclusion} if it admits a (colimit-preserving) fully faithful left adjoint functor $f_\natural:E\to D$ such that $f_\natural$ satisfies \emph{the projection formula}:
    \[
    f_\natural(f^\ast d\otimes e)\xrightarrow{\sim}d\otimes f_\natural e\quad\text{for all $d\in D$ and $e\in E$}.
    \]
    Equivalently, if there exists an idempotent algebra $A$ ($=\cofib(f_\natural1_E\to1_D)$) of $\Idem(D)$ such that $f^\ast A=0$ and $D/\Mod_A(D)\xrightarrow{\sim}E$ (see also {\cite[Lemma 6.4(2) \& Proposition 6.5(2)]{CC}}).
\end{enumerate}
Note that these (equivalent) definitions give the compatibility with base-change (by the last statement of {\cite[Proposition 6.5]{CC}}) and the geometric properties in the sense of \cite[Definition 2.1.1]{HM} (by {\cite[Corollary 6.6]{CC}} and {\cite[Lemma 2.1.5]{HM}}).
\exend
\end{dfn}

\begin{const}[structure sheaves]\label{const:structure sheaves}
For a closed subset $Z$ of $\S(C)$ corresponding to an idempotent algebra $A\in\Idem(C)$, we define the following notions:
\begin{enumerate}
    \item We define $C(Z)$ as the full $\infty$-subcategory $\Mod_A(C)$ of $C$.
    \item The inclusion $C(Z)\subset C$ is denoted by $i_{Z,\ast}$.
\end{enumerate}
In fact, $i_{Z,\ast}$ admits a left adjoint functor $i_Z^\ast:C\to C(Z)$ given by $X\longmapsto X\otimes A$, and a right adjoint functor $i_Z^!:C\to C(Z)$ given by $X\longmapsto\iHom(A,X)$. By dual discussion, we formally obtain the “complementary open subset" $U$ of $Z$, and define the following notions:
\begin{enumerate}
    \item We define $C(U)$ as the localization $\infty$-category $C/\Mod_A(C)=C/C(Z)$ of $C$.
    \item The localization $C\to C(U)$ is denoted by $j_U^\ast$.
\end{enumerate}
In fact, $j_U^\ast$ admits a left adjoint functor $j_{U,!}:C(U)\to C$ satisfying $j_{U,!}j_U^\ast X=\fib(X\to X\otimes A)$, and a right adjoint functor $j_{U,\ast}:C(U)\to C$ satisfying $j_{U,\ast}j_U^\ast X=\iHom(\fib(1\to A),X)$.

Moreover, these constructions give the following distinguished triangles:
\[
j_{U,!}j_U^\ast X\to X\to i_{Z,\ast}i_Z^\ast X\xrightarrow{+1},\quad i_{Z,\ast}i_Z^!X\to X\to j_{U,\ast}j_U^\ast X\xrightarrow{+1}.
\]
Furthermore, the functor $U\longmapsto C(U)$ defines a sheaf (as the usual one on locales) by {\cite[Proposition 5.5]{CC}}. In particular, the sheaf $U\longmapsto j_{U,\ast}j_U^\ast1_C=\iHom(\fib(1\to A),1)$ is called the \emph{structure sheaf}.
\exend
\end{const}

\begin{rmk}[open/closed descent]\label{rmk:open/closed descent}
Moreover, we can obtain the following formal descent in $\mathrm{Sym}^\op$ with respect to a Grothendieck topology defined by open inclusions (\cref{dfn:open/closed inclusions}) by {\cite[Theorem 6.7]{CC}}:
\begin{enumerate}
    \item The identity functor $(\mathrm{Sym}^\op)^\op\to\mathrm{Sym}$ defines a sheaf with respect to this Grothendieck topology above.
    \item The poset of open/closed inclusions satisfies descent with respect to this Grothendieck topology above.
\end{enumerate}
\exend
\end{rmk}

\begin{dfn}[{\cite[Definition 7.1]{CC}}, categorical locales]\label{dfn:categorical locales}
A \emph{categorified locale} is a triple $(X,C,f:\S(C)\to X)$ where $X$ is a locale, $C$ is a presentably closed symmetric monoidal stable $\infty$-category, and $f$ is a map of locales. If there is no confusion, then we abbreviate $(X,C)$. Note that the assignment $U\longmapsto C(f^{-1}(U))$ on $X$ defines a sheaf with values in $\mathrm{Sym}$ (by \cref{rmk:open/closed descent}).
\exend
\end{dfn}

\section{Recollection on (ind-)Banach algebras}\label{section:Recollection on (ind-)Banach algebras}

In this section, we define the derived $\infty$-category of Banach $\C$-algebras, and construct complex analytic spaces. First, we recall the properties for the ind-category of Banach $\C$-modules following {\cite[Section 3,4]{BCV}}.

\begin{dfn}[some notations]\label{dfn:some notations}
We define the following (perhaps, well-known) notions:
\begin{enumerate}
    \item ({\cite[Definition 3.2]{BBK}}) In a pre-abelian category (i.e., an additive category with kernels and cokernels), a map is \emph{strict} if there is a canonical isomorphism $\coim(f)\xrightarrow{\sim}\im(f)$.
    \item ({\cite[Definition 3.5]{BBK}}) A category $C$ is \emph{quasi-abelian} if $C$ is an additive category with kernels and cokernels, and the class of short strictly exact sequences makes $C$ an exact category.
    \item ({\cite[Definition 3.9, 3.10]{BBK}}) A quasi-abelian category $C$ has \emph{enough projectives} if any object $X\in C$ admits a strict epimorphism from a projective object. Note that the notion of \emph{projective} is defined if the functor $\Hom(X,-):C\to\mathrm{Ab}$ sends strict epimorphisms to epimorphisms.
    \item ({\cite[Definition 3.12, 3.13]{BBK}}) A quasi-abelian category $C$ with colimits is \emph{elementary} if $C$ admits a (small) family of compact projective objects satisfying the following condition: any object $X\in C$ admits a strict epimorphism $\bigoplus_{i\in I}X_i\to X$ where each $X_i$ lies in the generating family. Note that the notion of \emph{compact} is defined if the functor $\Hom(X,-):C\to\mathrm{Ab}$ commutes with filtered colimits.
    \item ({\cite[Definition 3.16, Lemma 3.17]{BBK}}) The \emph{ind-category} $\Ind(C)$ of a category $C$ is defined as the full subcategory of $\mathrm{PShv}(C,\mathrm{Set})$ spanned by filtered colimits of representable functors. Note that if $C$ is a small closed symmetric monoidal quasi-abelian category such that it has enough projectives, then $\Ind(C)$ is a closed symmetric monoidal elementary quasi-abelian category with limits and colimits ({\cite[Proposition 3.20]{BBK}}).
\end{enumerate}
Let $C$ be a small closed symmetric monoidal quasi-abelian category such that it has enough projectives, and the tensor product of two projectives is projective. The \emph{derived $\infty$-category} $\D(\Ind(C))$ is defined by a closed symmetric monoidal stable $\infty$-category represented by the category of cochain complexes of $\Ind(C)$ (discussed in {\cite[Subsection 3.4]{BCV}}). Note that the homotopy category of $\D(\Ind(C))$ is the derived category of a quasi-abelian category $\Ind(C)$ defined in {\cite[Definition 3.6]{BBK}}.
\exend
\end{dfn}

As an important example satisfying the conditions above, we need the notion of Banach $\C$-modules.

\begin{dfn}[{{\cite[Definition 3.28]{BBK}}}, Banach $\C$-modules]
A $\C$-module $M$ is \emph{Banach} if $M$ admits a map $|-|_M:M\to\mathbb{R}_{\geq0}$ satisfying the following conditions:
\begin{enumerate}
    \item We have $|m|_M=0$ if and only if $m=0$. 
    \item For any elements $m,n\in M$, we have $|m+n|_M\leq|m|_M+|n|_M$.
    \item For any elements $m\in M$, $x\in\C$, we have $|xm|_M\leq|x|_\C|m|_M$.
    \item The module $M$ is complete with respect to the induced topology by $|-|_M$.
\end{enumerate}
We let $\BMod_\C$ denote the category of Banach $\C$-modules with bounded maps (i.e., maps $f:M\to N$ of $\C$-modules satisfying that there is some constant $C>0$ such that $|f(m)|_N\leq C|m|_M$ for any element $m\in M$).
\exend
\end{dfn}

Thanks to {\cite[Lemma 3.68]{BBK}}, we can use ind-Banach $\C$-modules properly.

\begin{dfn}[derived $\infty$-category: local]\label{dfn:local}
The derived $\infty$-category $\widehat{\D}(\C)$ of \emph{ind-Banach $\C$-modules} is defined as the closed symmetric monoidal stable $\infty$-category $(\D(\Ind(\BMod_\C)),\widehat{\otimes}_\C^\L)$ (see \cref{dfn:some notations}). Let $A$ be a commutative monoid in $\widehat{\D}(\C)$ (for example, we can take an (ind-)Banach $\C$-algebra $A$ in the sense of {\cite[Definition 3.27]{BBK}}). We also define the derived $\infty$-category $\widehat{\D}(A)$ of \emph{ind-Banach $A$-modules} as the closed symmetric monoidal stable $\infty$-category $(\Mod_A(\widehat{\D}(\C)),\widehat{\otimes}_A^\L)$.
\exend
\end{dfn}

Therefore, we can consider categorified locales over $(\ast,\widehat{\D}(\C))$. Before our definition of complex analytic spaces, we restate a useful criterion.

\begin{rmk}[idempotency]\label{rmk:idempotency}
In $\widehat{\D}(\C)$, a map $A\to B$ in $\mathrm{Comm}(\widehat{\D}(\C))$ (where $\mathrm{Comm}(\widehat{\D}(\C))$ denotes the $\infty$-category of commutative algebra objects in $\widehat{\D}(\C)$) is a \emph{homotopy epimorphism} if the canonical map $B\widehat{\otimes}_A^\L B\to B$ is an isomorphism ({\cite[Lemma 2.14]{BBM}}). Also, a pair $(A,x)$ (where $A$ is a commutative algebra object in $\widehat{\D}(\C)$ and $x$ is an element of $A$) is \emph{strict} if $A$ is flat, the map $(x\otimes 1-1\otimes x):A\widehat{\otimes}_\C^\L A\to A\widehat{\otimes}_\C^\L A$ is strict (as a map in cochain complexes) and its cokernel is isomorphic to $A$ itself ({\cite[Definition 5.2]{BBM}}). With these notions, there are some properties for idempotency.
\begin{enumerate}
    \item  Homotopy epimorphisms are stable under derived base change by {\cite[Proposition 3.4]{BBM}}, under filtered colimits by {\cite[Proposition 3.5]{BBM}}, and under tensor products by {\cite[Proposition 3.6]{BBM}} under the flatness assumption.
    \item ({\cite[Theorem 5.3]{BBM}}) For a map $f:A\to B$ in $\mathrm{Comm}(\widehat{\D}(\C))$, if two pairs $(A,x)$ and $(B,f(x))$ are strict, then we obtain $B\widehat{\otimes}_A^\L B\simeq B$ (i.e. $A\to B$ is a homotopy epimorphism).
\end{enumerate}
By {\cite[Lemma 4.6]{BBM}}, we have a pair $(\C[T],T)$ as an example of strict pairs. Note that we consider $\C[T]$ as an ind-Banach $\C$-algebra (see below \cref{rmk:Berkovich realization for complex geometry}), and it is equal to the one-variable polynomial ring $\mathrm{Sym}(\C)$ in the sense of \cite[Subsection 4.1]{BBM}. If we have a strict pair $(A,x)$ with $\C[T]\xrightarrow{T\longmapsto x}A$, then by (2), the ring $A$ is idempotent over $\C[T]$,  and by (1), the $n$-th fold $A^{\widehat{\otimes}_{\C[T]}^\L n}$ is idempotent over $\C[T_1,\ldots,T_n]$.
\exend
\end{rmk}

Finally, we review the definition of Berkovich spaces.

\begin{dfn}[{\cite[Section 1]{Be}}, Berkovich spectra]\label{dfn:Berkovich spectra}
Let $A$ be a Banach $\C$-algebra. The \emph{Berkovich spectrum} $\M(A)$ of $A$ is the closed subspace $\M(A)\subset\prod_{a\in A}[0,|a|_A]$ of maps $\|-\|:A\to\mathbb{R}_{\geq0}$ satisfying the following conditions:
\begin{enumerate}
    \item We have $\|0\|=0$ and $\|1\|=1$.
    \item For any element $a\in A$, we have $\|a\|\leq|a|_A$. 
    \item For any elements $a,b\in A$, we have $\|ab\|=\|a\|\|b\|$.
    \item For any elements $a,b\in A$, we have $\|a+b\|\leq\|a\|+\|b\|$.
\end{enumerate}
Moreover, we can associate any ind-Banach $\C$-algebra $\ilim[i]A_i$ to $\plim[i]\M(A_i)$.

Note that it is clearly a compact Hausdorff space by Tychonoff's theorem. Moreover, we have the notion of \emph{rational subsets}
\[
\M(A)\left(\dfrac{f_1,\ldots,f_n}{g}\right):=\{x\in\M(A)\vert\|f_i(x)\|\leq\|g(x)\|\quad\text{for all $i$}\},
\]
where $f_1,\ldots,f_n,g$ generate the unit ideal.
\exend
\end{dfn}

\begin{rmk}[Berkovich realization for complex geometry]\label{rmk:Berkovich realization for complex geometry}
Note for any (discrete) $\C$-algebra $A$, we can endow $A$ with an ind-Banach $\C$-algebra structure. In this view, the Berkovich spectrum $\M(A)$ is homeomorphic to $\Hom_{\mathrm{Alg}_\C}(A,\C)$ by Ostrowski (\cite{O}) and Gelfand--Mazur (\cite{Ge} and \cite{M}). For example, we have $\M(\C[T])=\C$ where $\C$ is the usual complex plane. Moreover, this assignment sends the (classical) Zariski covers of $\Spec(A)$ to the rational open covers of $\M(A)$, which allow us to globalize this construction to any varieties over $\C$ (i.e., $\C$-scheme of finite type). That is, for a variety $X$ over $\C$, we can consider $X(\C)$ as Berkovich spaces.
\exend
\end{rmk}

\section{The Analytic Line}\label{section:The Analytic Line}

In this section, we define the “analytic" line of complex analytic spaces.

\begin{dfn}[rings of overconvergent functions]
The ring $\C\{|T|\leq1\}$ of \emph{overconvergent functions} is defined by
\[
\bigcup_{r>1}\left\{\sum_{n=0}^\infty a_nT^n\in\C[[T]]\middle\vert|a_n|_\C=o(r^{-n})\right\}=\bigcup_{r>1}\left\{\sum_{n=0}^\infty a_nT^n\in\C[[T]]\middle\vert\sum_{n=0}^\infty|a_n|r^n<\infty\right\},
\]
where each $\left\{\sum_{n=0}^\infty a_nT^n\in\C[[T]]\middle\vert\sum_{n=0}^\infty|a_n|r^n<\infty\right\}$ (in the right-hand side) is equipped with the norm given by $\sum_{n=0}^\infty|a_n|r^n$. Note that the identification above follows from shrinking convergent radii (see also {\cite[Lemma 4.11]{BBM}}).
\exend
\end{dfn}

\begin{lem}[idempotency of $\C\{|T|\leq1\}$]\label{lem:idempotency}
The ind-Banach $\C$-algebra $\C\{|T|\leq1\}$ is idempotent in $\widehat{\D}(\C[T])$.
\end{lem}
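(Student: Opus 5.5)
The plan is to apply the strict-pair criterion of \cref{rmk:idempotency}(2) to the map $\C[T]\to\C\{|T|\leq1\}$, $T\longmapsto T$. By {\cite[Lemma 4.6]{BBM}} the pair $(\C[T],T)$ is strict. It therefore suffices to show that the pair $(\C\{|T|\leq1\},T)$ is strict; then $\C\{|T|\leq1\}\widehat{\otimes}^\L_{\C[T]}\C\{|T|\leq1\}\simeq\C\{|T|\leq1\}$, which is exactly idempotency in $\widehat{\D}(\C[T])$.

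To check strictness, write $A=\C\{|T|\leq1\}=\ilim[r>1]A_r$, where $A_r$ is the Banach algebra of series with $\sum|a_n|r^n<\infty$.

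\textbf{Flatness.} Each $A_r$ is isomorphic to a weighted $\ell^1$-space $\ell^1(\mathbb{N},r^n)$. Such spaces are flat (indeed projective-type objects) for $\widehat{\otimes}_\C$, as in {\cite{BBK}}. Filtered colimits preserve flatness in $\Ind(\BMod_\C)$, so $A$ is flat.

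\textbf{The tensor square.} Since the projective tensor product of weighted $\ell^1$-spaces is again weighted $\ell^1$, we get $A_r\widehat{\otimes}_\C A_r\cong\C\{|T_1|\leq r,|T_2|\leq r\}$, the two-variable $\ell^1$ algebra. Hence
\[
A\widehat{\otimes}^\L_\C A\simeq\ilim[r>1]\C\{|T_1|\leq r,|T_2|\leq r\}.
\]

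\textbf{The map $T_1-T_2$.} On the ind-object we need two facts:
\begin{enumerate}
\item the map is injective with closed image, uniformly in the ind-system;
\item its cokernel is $A$, via the diagonal $T_1,T_2\longmapsto T$.
\end{enumerate}
For (2), given $f(T_1,T_2)$ we use the divided-difference formula
\[
f(T_1,T_2)-f(T_2,T_2)=(T_1-T_2)\,g,\qquad g=\sum_{n}\sum_{i+j=n-1}\cdots .
\]
Concretely, for a monomial, $T_1^n-T_2^n=(T_1-T_2)\sum_{i+j=n-1}T_1^iT_2^j$. This gives the norm bound $|g|_{r}\leq C\,|f|_{r'}$ for any $1<r<r'$, because the factor $n$ coming from the $n$ summands is absorbed by $(r/r')^n$. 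This bound yields a bounded splitting at the ind-level. It shows that $T_1-T_2$ is a strict monomorphism in $\Ind(\BMod_\C)$ whose cokernel, computed levelwise up to shrinking radii, is $\ilim[r]A_r=A$.

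The main obstacle is this strictness step. At a fixed radius the image of $T_1-T_2$ need not be closed with a bounded inverse, since the loss of the factor $n$ is real. One must therefore argue genuinely in the ind-category: a map of ind-objects is strict if it is represented by a compatible system of maps admitting bounded inverses after passing from $r'$ to $r$. I would verify this via the criterion in {\cite{BBK}} for strictness in $\Ind(\BMod_\C)$, essentially mirroring the argument of {\cite[Lemma 4.11 and Section 5]{BBM}} for overconvergent disks, with Archimedean $\ell^1$ norms in place of non-Archimedean sup norms.
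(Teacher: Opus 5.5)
Your proposal is correct and follows the paper's route: apply the strict-pair criterion of \cref{rmk:idempotency}(2) to $\C[T]\xrightarrow{T\mapsto T}\C\{|T|\leq1\}$, with $(\C[T],T)$ strict by \cite[Lemma 4.6]{BBM}. The only difference is the strictness of $(\C\{|T|\leq1\},T)$: the paper identifies the ring with the dagger algebra and cites \cite[Lemma 4.14]{BBM}, whereas you prove it by hand, and correctly so — your divided-difference bound $|g|_r\leq C|f|_{r'}$ for $1<r<r'$ splits $0\to A\widehat{\otimes}_\C A\xrightarrow{T_1-T_2}A\widehat{\otimes}_\C A\to A\to0$ in $\Ind(\BMod_\C)$, which gives exactly the needed strict monomorphism with cokernel $A$.
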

\begin{proof}
Note that $\C\{|T|\leq1\}$ is equal to the dagger algebra 
\[
\C\left\{\dfrac{T}{1}\right\}^\dagger:=\colim[1>r]\left(\C\left\langle\dfrac{T}{r}\right\rangle\right)\simeq\colim[1>r]\left(\C\left\{\dfrac{T}{r}\right\}\right)
\]
defined in {\cite[Definition 4.12]{BBM}}. By applying \cref{rmk:idempotency}(2) to $\C[T]\xrightarrow{T\mapsto T}\C\{|T|\leq1\}$, the desired idempotency follows from {\cite[Lemma 4.14]{BBM}} (see also {\cite[Theorem 5.8]{BBM}}).
\end{proof}

We define a new ring which is related to holomorphic functions on the unit disc of the complex plane.

\begin{dfn}[ring of holomorphic functions on the unit disc]
The ind-Banach $\C$-algebra $\C\{|T|<1\}$ of \emph{holomorphic functions} is defined by
\[
\bigcap_{r<1}\left\{\sum_{n=0}^\infty a_nT^n\in\C[[T]]\middle\vert\sum_{n=0}^\infty|a_n|r^n<\infty\right\},
\]
where each $\left\{\sum_{n=0}^\infty a_nT^n\in\C[[T]]\middle\vert\sum_{n=0}^\infty|a_n|r^n<\infty\right\}$ is equipped with the norm given by $\sum_{n=0}^\infty|a_n|r^n$. Note that $\Ind(\BMod_\C)$ admits not only colimits, but also limits. It coincides with the ring of holomorphic functions on the unit open disc (in the complex plane), and the following ring
\[
\O_\C(D_1):=\lim[r<1]\left(\C\left\langle\dfrac{T}{r}\right\rangle\right)\simeq\lim[r<1]\left(\C\left\{\dfrac{T}{r}\right\}\right)
\]
discussed in {\cite[Remark 4.15]{BBM}}.
\exend
\end{dfn}

\begin{rmk}\label{rmk:strictness}
As an analogue of {\cite[Corollary 5.8]{BCV}}, we can show that a map $\bigoplus_{\ell\geq0}\C\left\{\frac{T}{r}\right\} U^\ell[1]\xrightarrow{T-U}\bigoplus_{\ell\geq0}\C\left\{\frac{T}{r}\right\} U^\ell[1]$ is a strict monomorphism. Since this map is injective, and the canonical map to cokernel (i.e. the evaluation map at $U=T$) is bounded, it suffices to show that the following statement: let $E_n=\bigoplus_{i=0}^nBU^i$ with $\|\sum_{i=0}^nb_iU^i\|_{E_n}:=\sum_{i=0}^n\|b_i\|_B$ for a Banach $\C[T]$-module $B:=\C\left\{\frac{T}{r}\right\}$ with $\|T\|=r<1$, the division map by $(T-U)$ is bounded on the kernel of the evaluation map. In fact, suppose that $\sum_{i=0}^{n+1}b_iU^i=(T-U)(\sum_{i=0}^nc_iU^i)$, then we have $c_i=-\sum_{k=i+1}^{n+1}T^{k-i-1}b_k$, which allows us to evaluate
\[
\left\|\sum_{i=0}^nc_iU^i\right\|_{E_n}\leq\sum_{i=0}^n\sum_{k=i+1}^{n+1}r^{k-i-1}\|b_k\|_B\leq\dfrac{1}{1-r}\left(\sum_{k=0}^{n+1}\|b_k\|_B\right)=\dfrac{1}{1-r}\left\|\sum_{k=0}^{n+1}b_kU^k\right\|_{E_{n+1}}.
\]
Thus, we obtain the desired boundedness.
\exend
\end{rmk}

\begin{lem}[complement]\label{lem:complement}
We let $\C\{|T|\geq1\}$ denote an ind-Banach $\C$-algebra
\[
\bigcup_{r<1}\bigcup_{m\in\Z_{\geq0}}\left\{\sum_{n=-\infty}^ma_nT^n\in\C((T^{-1}))\middle\vert\sum_{n=-\infty}^m|a_n|r^n<\infty\right\},
\]
where each $\left\{\sum_{n\leq m}a_nT^n\in\C((T^{-1}))\middle\vert\sum_{n=-\infty}^m|a_n|r^n<\infty\right\}$ is equipped with the norm given by 
\[
\max\{\max_{0\leq n\leq m}\{|a_n|\},\sum_{n=0}^\infty|a_{-n}|r^{-n}\}\}.
\]
It satisfies the following properties:
\begin{enumerate}
    \item The ind-Banach $\C$-algebra $\C\{|T|\geq1\}$ is idempotent in $\widehat{\D}(\C[T])$.
    \item The corresponding localization of $\C[T]$ is isomorphic to $\C\{|T|<1\}$.
\end{enumerate}
\end{lem}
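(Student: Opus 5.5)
For (1) I would follow the same pattern as \cref{lem:idempotency}, via \cref{rmk:idempotency}(2) applied to the structure map $\C[T]\to\C\{|T|\geq1\}$, $T\mapsto T$. Since $(\C[T],T)$ is a strict pair by {\cite[Lemma 4.6]{BBM}}, it suffices to show that $(\C\{|T|\geq1\},T)$ is strict. I would not attack this directly. Instead I would write $\C\{|T|\geq1\}$ as a filtered colimit over $r<1$ and $m\geq0$ of the Banach pieces $B_{r,m}$, where $B_{r,m}$ consists of the series $\sum_{n\leq m}a_nT^n$ with $\sum|a_n|r^n<\infty$.

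Flatness of each $B_{r,m}$ should follow because it is a (co)product-type Banach sequence space over $\C$: an $\ell^1$-space with weights, which is flat in $\Ind(\BMod_\C)$ by {\cite{BBK}}. Flatness then passes to the filtered colimit.

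The central task is to show that
\[
T\otimes1-1\otimes T \colon \C\{|T|\geq1\}\widehat{\otimes}_\C\C\{|T|\geq1\}\to\C\{|T|\geq1\}\widehat{\otimes}_\C\C\{|T|\geq1\}
\]
is a strict monomorphism with cokernel $\C\{|T|\geq1\}$. Writing the second factor in the variable $U$, this amounts to strictness of multiplication by $T-U$ and to the cokernel being given by evaluation $U=T$.

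Here I would mimic \cref{rmk:strictness}, splitting the coordinates into two parts.
\begin{itemize}
\item The polynomial part (exponents $0,\dots,m$) is handled by the finite division formula of \cref{rmk:strictness}. After substituting $S=T^{-1}$, $V=U^{-1}$, the problem becomes one about $S-V$, which differs from $T-U$ by the unit $-TU$ up to the shift controlled by $m$.
\item The principal part in $S=T^{-1}$ has radius $r^{-1}>1$, i.e.\ it is an $\ell^1$ disc algebra in $S$ of radius $1/r$ (the algebra $\C\{S/r^{-1}\}$). Division by $S-V$ is then bounded by the same geometric-series estimate, provided we pass from radius $r^{-1}$ to a slightly smaller one $r'^{-1}$ with $r<r'<1$.
\end{itemize}
This is exactly where the colimit over $r$ (overconvergence) is needed: boundedness only holds after shrinking, which gives strictness in the ind-category. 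I expect this estimate to be the main obstacle, specifically making the division bounds uniform in $m$ and compatible with the mixed norm (max on the polynomial part, weighted $\ell^1$ on the principal part). Once strictness holds, (2) of \cref{rmk:idempotency} gives $\C\{|T|\geq1\}\widehat{\otimes}^\L_{\C[T]}\C\{|T|\geq1\}\simeq\C\{|T|\geq1\}$.

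For (2), by \cref{const:structure sheaves} the localization corresponding to the open complement is
\[
j_\ast j^\ast\C[T]=\iHom_{\C[T]}\bigl(\fib(\C[T]\to\C\{|T|\geq1\}),\C[T]\bigr).
\]
I would compute the fiber explicitly. The map $\C[T]\to\C\{|T|\geq1\}$ is injective, and its cokernel is the space of overconvergent principal parts $\colim_{r<1}T^{-1}\C\{T^{-1}/r^{-1}\}$, i.e.\ series in $S=T^{-1}$ with no constant term converging on some $|S|\leq r^{-1}$. So $\fib\simeq\mathrm{coker}[-1]$, and I must compute $\iHom_{\C[T]}$ of this shifted module into $\C[T]$.

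Rather than computing into $\C[T]$ naively, I would use the residue pairing $\langle f,g\rangle=\mathrm{Res}_{T=\infty}(fg\,dT)$. This pairing identifies the dual of $\colim_r\C\{S/r^{-1}\}$ with $\lim_{r<1}\C\{T/r\}=\C\{|T|<1\}$, and it is a $\C[T]$-linear duality (a Köthe-type duality between $\ell^1$ and $\ell^\infty$ weights, which become interchangeable after shrinking radii). The shift is absorbed by the cofiber sequence, giving
\[
j_\ast j^\ast\C[T]\simeq\C\{|T|<1\}.
\]
One must also check that higher derived $\iHom$ terms vanish. I would do this by resolving the cokernel as the cokernel of $T-U$ as above, reducing to a two-term computation. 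This derived-duality step, together with the strictness estimate in (1), is where I expect the real work to lie.
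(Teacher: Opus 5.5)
Your outline works, but part (1) takes a longer route than the paper, and in part (2) you have misidentified where the answer comes from.

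\textbf{Part (1).} The paper needs no new estimate here. With $S=T^{-1}$ one has $\C\{|T|\geq1\}\simeq\C\{|S|\leq1\}\widehat{\otimes}^{\L}_{\C[S]}\C[T^{\pm1}]$. This tensor product is underived, since $\C[T^{\pm1}]$ is a flat colimit of copies of $S^{-m}\C[S]$. Then \cref{lem:idempotency} and base-change stability of homotopy epimorphisms (\cref{rmk:idempotency}(1)) make $\C[T^{\pm1}]\to\C\{|T|\geq1\}$ idempotent. Composing with the idempotent localization $\C[T]\to\C[T^{\pm1}]$ finishes. Your identity $T-U=-TU(S-V)$ is the seed of this reduction.

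Verifying directly that $(\C\{|T|\geq1\},T)$ is a strict pair also works, but it amounts to re-proving \cite[Lemma 4.14]{BBM} in the variable $S$. If you go that way:
\begin{enumerate}
\item Do not split into polynomial and principal parts, since multiplication by $S-V$ does not respect that splitting.
\item Instead multiply by $(SV)^m$ to land in $\C\{S/\rho\}\widehat{\otimes}_\C\C\{V/\rho\}$. There, dividing $S^i-V^i$ by $S-V$ costs a factor of order $i$, which is absorbed by shrinking $\rho$.
\item Then pass to the colimit in $m$ using exactness of filtered colimits.
\end{enumerate}
Uniformity in $m$ is not needed: a morphism of ind-objects only requires a bound on each Banach piece separately.

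\textbf{Part (2).} Your route is the paper's:
\begin{enumerate}
\item $\fib\simeq M[-1]$ with $M=\colim[r<1]\C_r^{-1}\{|T|\geq1\}$;
\item an $\ell^1$/$\ell^\infty$ duality $N_r:=\iHom_\C(\C_r^{-1}\{|T|\geq1\},\C)\simeq\C\{T/r\}$;
\item the two-term resolution by $T-U$;
\item a limit over $r$.
\end{enumerate}
Your residue pairing, matching $T^{-n}$ with $T^{n-1}$, usefully makes the $\C[T]$-linearity of the duality explicit, which the paper leaves implicit.

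However, the residue pairing computes $\iHom_\C(M,\C)$, not $\iHom_{\C[T]}(M,\C[T])$, and the latter is \emph{zero}. Indeed, $T-U$ is injective on $\bigoplus_\ell N_rU^\ell$ (compare top $U$-degrees). So $\R\iHom_{\C[T]}(M,\C[T])\simeq\bigl(\R\lim[r<1]N_r\bigr)[-1]$ is concentrated in degree $1$, as the cokernel of $T-U$. It is this $[-1]$ that cancels the shift in $\fib\simeq M[-1]$. The two-term resolution is therefore not a check that higher terms vanish; it is the step that produces the answer.

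Running it requires three inputs:
\begin{enumerate}
\item[(a)] compactness and projectivity of each Banach piece $\C_r^{-1}\{|T|\geq1\}$, so that $\iHom_\C(-,\C[U])=\bigoplus_\ell\iHom_\C(-,\C)U^\ell$;
\item[(b)] strictness of $T-U$ there (\cref{rmk:strictness});
\item[(c)] $\R\lim[r<1]\C\{T/r\}\simeq\C\{|T|<1\}$ (\cite[Lemma 6.3]{BBK}).
\end{enumerate}
Input (c) is the genuine higher-vanishing statement you need.
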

\begin{proof}
The statement (1) follows from the formal discussion for base-change along $T\to T^{-1}$ as in {\cite[Remark 7.1]{BCV}} and \cref{lem:idempotency} (see also {\cite[Lemma 6.5]{BBM}}). Therefore, it suffices to show the statement (2). Note that the fiber of $\C[T]\to\C\{|T|\geq1\}$ is isomorphic to $(\bigcup_{r<1}\C_r^{-1}\{|T|\geq1\})[-1]$ where $\C_r^{-1}\{|T|\geq1\}$ denotes the Banach $\C$-algebra $\left\{\sum_{n=-\infty}^{-1}a_nT^n\in\C((T^{-1}))\middle\vert\sum_{n=1}^\infty|a_{-n}|r^{-n}<\infty\right\}$, we obtain
\[
\R\iHom_{\C[T]}(\fib(\C[T]\to\C\{|T|\geq1\}),\C[T])\simeq\R\plim[r<1](\R\iHom_{\C[T]}(\C_r^{-1}\{|T|\geq1\},\C[T])[1])
\]
Moreover, noting that $\C_r^{-1}\{|T|\geq1\}$ is (internally) projective and compact in $\Ind(\BMod_\C)$ since $\C_r^{-1}\{|T|\geq1\}$ admits a decomposition of the form as in the proof of \cref{lem:idempotency}, there is a further representation over $\C$:
\[
\R\iHom_\C(\C_r^{-1}\{|T|\geq1\},\C[U])[1]\simeq\iHom_\C(\C_r^{-1}\{|T|\geq1\},\C[U])[1]\simeq\bigoplus_{\ell\geq0}\iHom_\C(\C_r^{-1}\{|T|\geq1\},\C)U^\ell[1]
\]
By {\cite[Lemma 3.48, Corollary 3.50]{BBK}}, we obtain
\[
\iHom_\C(\C_r^{-1}\{|T|\geq1\},\C)\simeq\prod_{n\geq1}{}^{\leq1}(\C_{r^{-n}})^\lor\simeq\prod_{n\geq1}{}^{\leq1}(\C_{r^n}),
\]
and this representation gives an isomorphism $\iHom_\C(\C_r^{-1}\{|T|\geq1\},\C)\simeq\C\left\{\frac{T}{r}\right\}$. {\cite[Lemma 6.3]{BBK}} implies that
\[
\C\{|T|<1\}\simeq\lim[r<1]\left(\C\left\{\dfrac{T}{r}\right\}\right)\simeq\R\lim[r<1]\left(\C\left\{\dfrac{T}{r}\right\}\right),
\]
and we have $\mathrm{cone}\left(\bigoplus_{\ell\geq0}\C\left\{\frac{T}{r}\right\} U^\ell[1]\xrightarrow{T-U}\bigoplus_{\ell\geq0}\C\left\{\frac{T}{r}\right\} U^\ell[1]\right)\simeq\C\left\{\frac{T}{r}\right\}[1]$ via a similar method as in the proof of {\cite[Proposition 7.2]{BCV}}. By \cref{rmk:strictness}, $\R\iHom_{\C[T]}(\C_r^{-1}\{|T|\geq1\},\C[T])[1]$ is isomorphic to the cone of $\times(T-U)$ above. By \cref{const:structure sheaves}, the computations give the following isomorphisms:
\[
\R\iHom_{\C[T]}(\fib(\C[T]\to\C\{|T|\geq1\}),\C[T])\simeq\R\lim[r<1](\R\iHom_{\C[T]}(\C_r^{-1}\{|T|\geq1\},\C[T])[1])\simeq\R\lim[r<1]\left(\C\left\{\dfrac{T}{r}\right\}\right)\simeq\C\{|T|<1\}.
\]
\end{proof}

\begin{lem}[union]\label{lem:union}
There is the following fiber sequence in $\widehat{\D}(\C[T])$:
\[
\C[T]\to\C\{|T|\leq1\}\oplus\C\{|T|\geq1\}\to\C\{|T|\leq1\}\widehat{\otimes}_{\C[T]}^\L\C\{|T|\geq1\}.
\]
\end{lem}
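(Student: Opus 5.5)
We need a fiber sequence $\C[T]\to A\oplus B\to A\widehat{\otimes}^\L_{\C[T]}B$ with $A=\C\{|T|\leq1\}$ and $B=\C\{|T|\geq1\}$. The plan is to compute the tensor product explicitly and then check exactness by comparing coefficients of power series.

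The first step is to identify $A\widehat{\otimes}^\L_{\C[T]}B$ with the overconvergent annulus ring
\[
\C\{|T|=1\}:=\bigcup_{r<1}\left\{\sum_{n\in\Z}a_nT^n\middle\vert\sum_{n\geq0}|a_n|r^{-n}+\sum_{n<0}|a_n|r^{n}<\infty\right\}.
\]
Both $A$ and $B$ are filtered colimits of Banach algebras, $A=\colim[r<1]\C\{\frac{T}{r^{-1}}\}$ and $B=\colim[r<1]\C\{\frac{r}{T}\}[T]$. Since $\widehat{\otimes}^\L$ commutes with filtered colimits, it suffices to work at a fixed level $r$.

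At fixed $r$, I would use the presentation
\[
A_r\widehat{\otimes}^\L_{\C[T]}B_r\simeq\mathrm{cone}\Bigl(A_r\widehat{\otimes}_\C B_r[U]\xrightarrow{T-U}A_r\widehat{\otimes}_\C B_r[U]\Bigr),
\]
obtained from the Koszul resolution of $\C[T]$ over $\C[T,U]$. This is the same strict-pair mechanism as in \cref{rmk:idempotency}. I then need two facts:
\begin{enumerate}
    \item multiplication by $T-U$ is a strict monomorphism;
    \item its cokernel is the Banach annulus ring $\C\{r\leq|T|\leq r^{-1}\}$.
\end{enumerate}
Strictness is proved by the division estimate of \cref{rmk:strictness}. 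One writes an element killed by evaluation at $U=T$ as a multiple of $T-U$, and bounds the quotient by a geometric series in $r$. Here both $|T|$ and $|T^{-1}|$ are controlled by constants at level $r$, so the same $\frac{1}{1-r}$-type estimate applies. Passing to the colimit gives $A\widehat{\otimes}^\L_{\C[T]}B\simeq\C\{|T|=1\}$, concentrated in degree $0$.

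With this identification, the sequence becomes the Mayer--Vietoris type sequence
\[
0\to\C[T]\xrightarrow{(\iota,\iota)}A\oplus B\xrightarrow{(f,g)\mapsto f-g}\C\{|T|=1\}\to0.
\]
I would check that it is strictly exact in $\Ind(\BMod_\C)$, working levelwise in $r$.
\begin{itemize}
    \item \textbf{Injectivity.} The first map is injective because both component maps are.
    \item \textbf{Exactness in the middle.} If $f\in A$ and $g\in B$ agree as Laurent series, then $f$ has no negative terms and $g$ has only finitely many positive terms. Hence $f=g$ is a polynomial.
    \item \textbf{Surjectivity.} Any $h=\sum_n a_nT^n\in\C\{|T|=1\}$ splits as $h=h_{\geq0}+h_{<0}$ with $h_{\geq0}\in A$ and $-h_{<0}\in B$. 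This splitting is norm-nonincreasing for the chosen weighted $\ell^1$-norms, which gives strictness and a bounded section.
\end{itemize}

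A short strictly exact sequence in $\Ind(\BMod_\C)$ gives a fiber sequence in $\widehat{\D}(\C)$. Since all maps are $\C[T]$-linear, it is also a fiber sequence in $\widehat{\D}(\C[T])$.

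The hard part is the first step: the strictness of $T-U$ on $A_r\widehat{\otimes}_\C B_r[U]$ and the identification of its cokernel. The difficulty is that $B_r$ carries an additional polynomial direction in $T$, and the norm on $B$ is a max/sum hybrid. I would first reduce to the case $B_r=\C\{\frac{r}{T}\}$ by writing $B$ as a colimit over $m$ of $T^m\C\{\frac{r}{T}\}$. Each such piece is a rescaled copy, and multiplication by the unit $T^m$ is an isomorphism after tensoring with the annulus ring. Alternatively, one could invoke the criterion in \cref{rmk:idempotency}(2) together with the flatness of $A$ and $B$ from \cref{lem:idempotency} and \cref{lem:complement}.
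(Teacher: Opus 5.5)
Your proposal is correct and follows essentially the paper's route: you identify $\C\{|T|\leq1\}\widehat{\otimes}^\L_{\C[T]}\C\{|T|\geq1\}$ with the overconvergent annulus ring, then verify the Mayer--Vietoris sequence by splitting Laurent series into nonnegative and negative parts and noting that the kernel is $\C[T]$; the splitting is a bounded but merely $\C$-linear section, which suffices because fiber sequences in $\widehat{\D}(\C[T])$ are detected in $\widehat{\D}(\C)$. Your sketch of the tensor product, which the paper only calls a ``routine computation'', is fine in outline, but the division estimate does not carry over verbatim from \cref{rmk:strictness}: the termwise bound coming from the one-variable recursion diverges along negative powers of $U$, so you should group monomials along $TU^{-1}$ and use the vanishing on the diagonal to divide by $1-TU^{-1}$ via tail sums.
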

\begin{proof}
This strategy essentially follows from {\cite[Proposition 5.7]{CC}}. A routine computation implies 
\[
\C\{|T|\leq1\}\widehat{\otimes}_{\C[T]}^\L\C\{|T|\geq1\}\simeq\bigcup_{r>1,s<1}\left\{\sum_{n=-\infty}^\infty a_nT^n\in\C((T))\middle\vert |a_{-n}|_\C=o(s^n), |a_n|_\C=o(r^{-n})\right\}.
\]
Dividing the nonnegative part and the negative part $s\left(\sum_{n=-\infty}^\infty a_nT^n\right)=\left(\sum_{n=0}^\infty a_nT^n,-\sum_{n=1}^\infty a_{-n}T^{-n}\right)$ for an element of $\C\{|T|\leq1\}\widehat{\otimes}_{\C[T]}^\L\C\{|T|\geq1\}$ gives an element of $\C\{|T|\leq1\}\oplus\C\{|T|\geq1\}$. Therefore, the canonical map $\C\{|T|\leq1\}\oplus\C\{|T|\geq1\}\to\C\{|T|\leq1\}\widehat{\otimes}_{\C[T]}^\L\C\{|T|\geq1\}$ is surjective. Also, by taking any element $(f,g)$ of its kernel, we obtain $f=g$, which implies $f=g\in\C[T]$ by definition of $\C\{|T|\leq1\}$ and $\C\{|T|\geq1\}$. Finally, the induced norm of $s\left(\sum_{n=-\infty}^\infty a_nT^n\right)$ is bounded by the norms of $\sum_{n=0}^\infty a_nT^n$ in $\C\{|T|\leq1\}$ and $\sum_{n=1}^\infty a_{-n}T^{-n}$ in $\C\{|T|\geq1\}$, and they are bounded by the norm of $\sum_{n=-\infty}^\infty a_nT^n$ in $\bigcup_{r>1,s<1}\left\{\sum_{n=-\infty}^\infty a_nT^n\in\C((T))\middle\vert |a_{-n}|_\C=o(s^n), |a_n|_\C=o(r^{-n})\right\}$, which implies that $s$ is a map in $\widehat{\D}(\C[T])$. This observation shows that the sequence $\C[T]\to\C\{|T|\leq1\}\oplus\C\{|T|\geq1\}\to\C\{|T|\leq1\}\widehat{\otimes}_{\C[T]}^\L\C\{|T|\geq1\}$ above can be realized as a fiber sequence in $\widehat{\D}(\C[T])$.
\end{proof}

Finally, using \cref{section:Abstract GAGA}, we obtain the following abstract GAGA.

\begin{pro}\label{pro:some relations}
For any positive number $r>0$, we let $\{|T|<1\}$ (resp. $\{|T|<r\}$) denote the open subset determined by $\C\{|T|<1\}$ of $\S(\C[T])$ in the sense of \cref{dfn:open/closed inclusions} (resp. the open subset $\{|T/r|<1\}$), and define $\{|T|\leq r\}, \{|T|>r\}, \{|T|\geq r\}$ similarly. For any element $f\in\C[T]$ and any positive number $r>0$, we let $\{|f|< r\}$ denote the preimage of $\{|T|<r\}$ induced by $\C[T]\xrightarrow{T\mapsto f}\C[T]$, and define $\{|f|\leq r\}, \{|f|>r\}, \{|f|\geq r\}$ similarly. For any elements $f,g\in\C[T]$, $\alpha\in\C$ and any positive numbers $r,s>0$, we have the following relations:
\begin{enumerate}
    \item We have $\{|f|\leq1\}=\bigcap_{r>1}\{|f|\leq r\}$ and $\{|f|\geq1\}=\bigcap_{0<r<1}\{|f|\geq r\}$.
    \item We have $\{|f|\leq1\}\cup\{|f|\geq1\}=\S(\C[T])$.
    \item If $r<1$, then we have $\{|f|\leq r\}\cap\{|f|\geq1\}=\emptyset$.
    \item We have $\{|f|\leq 1\}\cap\{|g|\leq 1\}\subset \{|fg|\leq 1\}$ and $\{|f|\geq 1\}\cap\{|g|\geq 1\}\subset \{|fg|\geq 1\}$.
    \item If $|\alpha|_\C\leq1$, then we have $\{|\alpha|\leq1\}=\S(\C[T])$. If $|\alpha|_\C\geq1$, then we have $\{|\alpha|\geq1\}=\S(\C[T])$.
    \item We have $\{|f|\leq r\}\cap\{|g|\leq s\}\subset \{|f+g|\leq r+s\}$.
\end{enumerate}
\end{pro}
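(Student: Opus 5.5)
Throughout, I translate each set-theoretic relation into a statement about idempotent algebras in $\Idem(\widehat{\D}(\C[T]))$ via the dictionary of the locale construction. Intersection corresponds to $\widehat{\otimes}^\L_{\C[T]}$, union to the fiber of $A\oplus A'\to A\otimes A'$, and $Z\subset Z'$ to $A\otimes A'\simeq A$. I also use pullback along $\phi_f:\C[T]\xrightarrow{T\mapsto f}\C[T]$. Since idempotents and the locale operations are compatible with base change (\cref{dfn:open/closed inclusions}), every statement for general $f$ reduces to the case $f=T$, or to two variables $T_1,T_2$ pulled back along $T_i\mapsto f,g$. Here $\{|T|\le r\}$ is given by $\C\{|T/r|\le1\}$ and $\{|T|\ge r\}$ by $\C\{|T/r|\ge1\}$. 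The closed subsets in question are represented by $\C\{|T|\le1\}$ (\cref{lem:idempotency}) and $\C\{|T|\ge1\}$ (\cref{lem:complement}).

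\emph{Items (1), (2), (5).} For (1), I unfold the definition: $\C\{|T|\le1\}=\colim[r>1]\C\{T/r\}$. I then check that $\colim[r>1]\C\{|T/r|\le1\}$ is the same ind-object, by cofinality of the overconvergent systems. The case $\{|f|\ge1\}$ is identical, using the union over $r<1$ built into the definition of $\C\{|T|\ge1\}$; the intersection corresponds to the colimit by item (4) of the locale construction. Item (2) is exactly \cref{lem:union}: the fiber of $\C\{|T|\le1\}\oplus\C\{|T|\ge1\}\to\C\{|T|\le1\}\widehat{\otimes}^\L\C\{|T|\ge1\}$ is $\C[T]$, which represents the whole space. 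Pulling back along $\phi_f$ gives the result for general $f$. For (5), pull back along $\C[T]\to\C[T]$ with $T\mapsto\alpha$, which factors through $\C$. Then $\C\{|T|\le1\}\widehat{\otimes}_{\C[T]}\C$, with $T$ acting by $\alpha$, is evaluation of overconvergent series at $\alpha$. For $|\alpha|\le1$ this equals $\C$ because the map $\C[T]\to\C$ extends to $\C\{|T|\le1\}$. The idempotent pulled back to $\C[T]$ is then $\C[T]$ itself, which corresponds to the whole space. The case $|\alpha|\ge1$ is symmetric via $T\mapsto T^{-1}$.

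\emph{Items (3), (4), (6).} For (3), I compute $\C\{|T|\le r\}\widehat{\otimes}^\L_{\C[T]}\C\{|T|\ge1\}$ with $r<1$. As in the routine computation of \cref{lem:union}, it is the algebra of Laurent series converging on an annulus $\{1\le|T|\le r\}$. That annulus is empty, so the algebra should be $0$. The cleanest route is to show $T$ is both topologically nilpotent and invertible with bounded inverse of norm $\le 1$. Then $1=T^nT^{-n}$ has norm $\le r^n\to0$, forcing $1=0$ in every Banach stage, and hence the colimit vanishes. Derived issues are controlled by the strictness of \cref{rmk:idempotency}.

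For (4), I work over $\C[T_1,T_2]$. I need a map of algebras $\C\{|U|\le1\}\to\C\{|T_1|\le1\}\widehat{\otimes}\C\{|T_2|\le1\}$ with $U\mapsto T_1T_2$; a map of idempotents gives the inclusion. Such a map exists since $\sum a_n(T_1T_2)^n$ converges in $\C\{T_1/r,T_2/r\}$ whenever $\sum|a_n|r^{2n}<\infty$, and the required reindexing is cofinal. The $\ge$ case uses the norms on negative powers similarly. For (6), I analogously produce a bounded algebra map $\C\{|U|\le r+s\}\to\C\{|T_1|\le r\}\widehat{\otimes}\C\{|T_2|\le s\}$ with $U\mapsto T_1+T_2$. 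The bound $\|(T_1+T_2)^n\|\le(r'+s')^n$ follows from the binomial theorem with $\ell^1$-norms.

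\emph{Main obstacle.} I expect item (3) to be the hardest: making the vanishing rigorous in the derived ind-Banach setting, with both the tensor product and its higher derived terms, rather than only at the naive level. My first attempt will be to present $\C\{|T|\ge1\}$ as a colimit of Banach algebras in which $T$ is invertible. I will then show each tensor factor is a Banach algebra in which $\|1\|\le r^n\|1\|$, hence zero, and use that filtered colimits and flatness preserve the vanishing.
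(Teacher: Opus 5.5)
\textbf{Items (1), (2), (4), (5), (6) are fine; item (3) has a genuine gap.}

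Items (1), (2), (4), (5), (6) are the reduction the paper's one-line proof intends; it only invokes the three lemmata and \cite[Proposition 5.7]{CC}. Base change along $T\mapsto f$ or $(T_1,T_2)\mapsto(f,g)$ commutes with all locale operations. Item (1) is cofinality, and (2) is \cref{lem:union}. Items (4) and (6) follow from your explicit bounded maps: a unit-compatible map $A'\to A$ makes $A$ a retract of $A\otimes A'$, hence $A\otimes A'\simeq A$. In (5), if $\C[T]\to\C$ factors through an idempotent $A$, then $\C$ is an $A$-module, so $A\widehat{\otimes}^\L_{\C[T]}\C\simeq\C$.

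The route you sketch for (3) would fail, for two reasons.
\begin{enumerate}
\item $\C\{|T|\geq1\}$ is not a filtered colimit of Banach $\C[T]$-algebras, let alone ones in which $T$ is invertible. A bounded map from a Banach space into it factors through a single stage $\{\sum_{n\le m}a_nT^n\}$, and that stage cannot contain all powers of $T$. Rewriting in $S=T^{-1}$ only moves the same problem to $\C\{|T|\le r\}[T^{-1}]$.
\item Even for Banach $\C[T]$-algebras $E_1,E_2$, the object $E_1\widehat{\otimes}^\L_{\C[T]}E_2$ is the Koszul complex of $T\otimes1-1\otimes T$ on $E_1\widehat{\otimes}_\C E_2$, not a Banach algebra. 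Your estimate only kills $1$ in the separated cokernel, i.e.\ it shows this differential has dense image. The strictness in \cref{rmk:idempotency} concerns the pairs $(\C\{|T|\le r\},T)$ and $(\C\{|T|\geq1\},T)$ separately, so it gives no control over this mixed differential.
\end{enumerate}
Also, $T^{-1}$ has norm $\sigma^{-1}>1$ on the stage indexed by $\sigma<1$, not norm $\le1$. You need overconvergence on both sides to get a ratio below $1$.

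The missing idea is to test vanishing on global sections instead of computing the tensor product. Let $B=\C\{|T|\le r\}\widehat{\otimes}^\L_{\C[T]}\C\{|T|\geq1\}$, with unit $u$ and multiplication $m$. Since $\mathrm{id}_B\simeq m\circ(u\otimes\mathrm{id}_B)$, it suffices to show $1=0$ in the ring $R=\pi_0\mathrm{Map}(\C[T],B)$.
\begin{enumerate}
\item Pick $r<r'<\sigma<1$, set $\rho=\sigma^{-1}$, and let $E=\C\{T/r'\}\widehat{\otimes}_\C\C\{S/\rho\}$. This is a weighted $\ell^1$ Banach algebra.
\item The algebra maps $\C\{T/r'\}\to\C\{|T|\le r\}$ and $\C\{S/\rho\}\to\C\{|T|\geq1\}$, $S\mapsto T^{-1}$, induce a ring map $E\to R$.
\item Let $t$ be the image of $T$ in $R$. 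Both maps into $B$ are $\C[T]$-algebra maps, so $T\otimes1\mapsto t$ and $1\otimes S\mapsto t^{-1}$. Hence $1-T\otimes S\mapsto0$.
\item On the other hand, $\|T\otimes S\|\le r'/\sigma<1$, so $1-T\otimes S$ is a unit of $E$ by the Neumann series.
\end{enumerate}
Therefore $1=0$ in $R$, and $B=0$. Alternatively, $T-U$ acting on the ind-object $\C\{|T|\le r\}\widehat{\otimes}_\C\C\{|U|\geq1\}$ has the bounded inverse $-\sum_{k\ge0}T^kU^{-k-1}$ on the cofinal stages with $r'<\sigma$. So the Koszul complex computing $B$ is acyclic.
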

\begin{proof}
The three lemmata above (\cref{lem:idempotency}, \cref{lem:complement} and \cref{lem:union}) allow us to give the similar argument as in {\cite[Proposition 5.7]{CC}}, which gives the desired relations.
\end{proof}

\begin{thm}[GAGA]\label{thm:GAGA}
The triple $(\C,\widehat{\D}(\C),\bigcup_{r>0}\{|T|<r\})$ is a GAGA setup. In particular, for a proper variety $X$ over $\C$, we obtain an equivalence $C^\an(X)\simeq C^\alg(X)$.
\end{thm}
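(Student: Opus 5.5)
The plan is to verify the axioms of a GAGA setup in the sense of \cite[Lecture VI, VII]{CC}, as summarized in \cref{section:Abstract GAGA}, for the triple $(\C,\widehat{\D}(\C),\bigcup_{r>0}\{|T|<r\})$. Once these are checked, the formal machinery of that section gives the equivalence $C^\an(X)\simeq C^\alg(X)$ for proper $X$. Concretely, a GAGA setup asks for three things. First, a categorified locale structure on the affine line, namely a map of locales $\S(\widehat{\D}(\C[T]))\to\C$. Second, the ``norm'' relations for the closed subsets $\{|f|\leq r\}$ and $\{|f|\geq r\}$ which make this map well defined and compatible with products. Third, the statement that the analytic line $\bigcup_{r>0}\{|T|<r\}$ is the open subset complementary to ``the point at infinity''. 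I will verify these in this order.

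\textbf{Step 1 (the locale map).} Send each closed disc $\{|T-\alpha|\leq r\}\subset\C$ to the idempotent algebra obtained from $\C\{|T|\leq1\}$ by the substitution $T\mapsto (T-\alpha)/r$; it is idempotent by \cref{lem:idempotency} and base change (\cref{rmk:idempotency}(1)). Send the complements $\{|T-\alpha|\geq r\}$ to the analogous translates of $\C\{|T|\geq1\}$ from \cref{lem:complement}(1). To see that this extends to a map of locales from the standard locale of $\C$, one uses \cref{pro:some relations}(1)--(6) exactly as in the argument of \cite[Proposition 5.7]{CC}. The covering relation (2), which comes from \cref{lem:union}, together with the disjointness relation (3) show that open and closed discs behave as complements. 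Relations (4)--(6) give compatibility with the ring operations. This determines a map $\S(\widehat{\D}(\C[T]))\to\C$, where $\C$ is identified with $\M(\C[T])$ as in \cref{rmk:Berkovich realization for complex geometry}. Taking $n$-fold tensor products, again via \cref{rmk:idempotency}(1), produces the analogous maps to $\C^n$.

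\textbf{Step 2 (the analytic line).} By \cref{lem:complement}(2), the sections of the open subset $\{|T|<1\}$ are $\C\{|T|<1\}$. Rescaling gives $\{|T|<r\}$, and the union $\bigcup_r\{|T|<r\}$ is an open subset of $\S(\C[T])$ whose ring of functions is the ring of entire functions, obtained as $\R\lim$ over $r$ as in the proof of \cref{lem:complement}. Its closed complement is $\bigcap_r\{|T|\geq r\}$, which corresponds to $\colim_r$ of the rescaled copies of $\C\{|T|\geq1\}$; this colimit plays the role of ``the point at infinity''. The GAGA axiom then requires that for the projective line, obtained by gluing two copies of $\C[T]$ along $T\mapsto T^{-1}$, the analytic category is obtained by removing these points at infinity, and that properness forces the natural comparison map to be an equivalence.

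\textbf{Step 3 (the main obstacle).} I expect the hardest step to be the key ``properness'' axiom: over $\mathbb{P}^1$, the closed subsets $\{|T|\leq1\}$ and $\{|T^{-1}|\leq1\}$ must cover everything, and the algebraic and analytic structure sheaves must agree. My approach is to glue \cref{lem:union} along $T\mapsto T^{-1}$, so that $\mathbb{P}^1$ is covered by the two overconvergent discs with overlap $\C\{|T|\leq1\}\widehat{\otimes}^\L_{\C[T]}\C\{|T|\geq1\}$. The overlap ring is the ring of overconvergent Laurent series computed in \cref{lem:union}, and the comparison then reduces to the decomposition of Laurent series into nonnegative and negative parts carried out there. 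With this axiom in hand, the remaining reduction from proper $X$ to $\mathbb{P}^n$, via Chow's lemma and dévissage, is formal and is supplied by \cref{section:Abstract GAGA}.
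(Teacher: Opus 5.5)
There is a genuine gap: your proposal does not verify the axioms that define a GAGA setup. In \cref{section:Abstract GAGA}, a GAGA setup is given by five local conditions on the opens $\S(A,f)$ (the preimages of $\bigcup_{r>0}\{|T|<r\}$ under $T\mapsto f$), required for every finitely generated $\C$-algebra $A$:
\begin{enumerate}
\item constants are analytic;
\item $\S(A,f)\cap\S(A,g)\subset\S(A,f+g)\cap\S(A,fg)$;
\item $D^\an(f)=\S(A[1/f],1/f)$ is open in $\S(A)$;
\item the analytic decomposition $\S(A)=\S(A,f)\cup D^\an(f)$;
\item $D^\an(f+g)\subset D^\an(f)\cup D^\an(g)$.
\end{enumerate}
None of your three ``requirements'' appears among these: a locale map $\S(\widehat{\D}(\C[T]))\to\C$, the analytic line as the complement of a point at infinity, and a properness statement on $\mathbb{P}^1$. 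You also never check the actual conditions.

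The paper's proof consists exactly in deducing these conditions from \cref{pro:some relations}. For example:
\begin{itemize}
\item Relation (5) gives that constants are analytic.
\item Relations (4) and (6), combined with (1) and rescaling to pass from closed discs to the union of open discs, give closure under sums and products.
\item The decomposition axiom comes from (2) (i.e.\ \cref{lem:union}) together with (3). Indeed $\S(A)=\{|f|\le1\}\cup\{|f|\ge1\}$, where $\{|f|\le1\}\subset\{|f|<2\}\subset\S(A,f)$. Also $\{|f|\ge1\}\subset D^\an(f)$, because $f$ is invertible there with $|1/f|\le1$.
\end{itemize}
Your Step 1 invokes the same relations, but uses them to build a map of locales to $\C$ that the GAGA formalism does not need.

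Properness is likewise not an axiom to be checked by hand. In \cref{thm:abstract GAGA} it is handled entirely on the Huber-pair side. For separated $X$ one gets an open immersion $(X^\ad,C^\an(X))\hookrightarrow(X^{\ad/R},C^\alg(X))$ of categorified locales. For proper $X$ this is an isomorphism by the valuative criterion of properness.

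Your substitute is a computation on $\mathbb{P}^1$ via the Laurent decomposition of \cref{lem:union}, followed by a reduction through Chow's lemma and d\'evissage. This is not what \cref{section:Abstract GAGA} supplies, and it would not work as stated, for two reasons:
\begin{itemize}
\item The Laurent decomposition yields at most a comparison of global sections of the structure sheaf on $\mathbb{P}^1$. The claim, by contrast, is an equivalence $C^\an(X)\simeq C^\alg(X)$ of entire presentably symmetric monoidal stable $\infty$-categories.
\item Chow's lemma with d\'evissage is an argument about coherent sheaves. It would require an analytic coherence and finiteness theory that is nowhere available in this setting.
\end{itemize}
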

\begin{proof}
By \cref{section:Abstract GAGA}, it suffices to show the first statement. The first statement follows from \cref{pro:some relations}.
\end{proof}

\appendix

\section{Abstract GAGA}\label{section:Abstract GAGA}

In this appendix, we summarize the machinery for GAGA briefly following {\cite[Lecture V\hspace{-1pt}I]{CC}}.

\begin{nota}
We fix the following objects:
\begin{enumerate}
    \item A Noetherian base ring $R$.
    \item A $R$-linear object $C$ of $\mathrm{Sym}$.
    \item An open subset $(\A^1)^\an\subset\S(\Mod_{R[T]}(C))$ as locales (discussed in \cref{dfn:open/closed inclusions}).
\end{enumerate}
For an $R$-algebra $A$, we let $\S(A)$ denote a locale $\S(\Mod_A(C))$, moreover for an element $f\in A$, we let $\S(A,f)$ denote the preimage of $(\A^1)^\an$ via $T\longmapsto f$.
\exend
\end{nota}

We set the following assumption for “analytification" considering $\S(A,f)\subset\S(A)$ as the subset on which $f$ is analytic.

\begin{dfn}[{\cite[Lecture V\hspace{-1pt}I]{CC}}, GAGA setup]
A triple $(R,C,(\A^1)^\an)$ is a \emph{GAGA setup} if it satisfies the following properties:
\begin{enumerate}
    \item \emph{(constants are analytic)} For any element $f\in R$, we have $\S(R,f)=\S(R)$.
    \item \emph{(the sum and product of analytic functions are analytic)} For any $R$-algebra $A$ of finite type, and any elements $f,g\in A$, $\S(A,f)\cap\S(A,g)$ is contained in $\S(A,fg)$ and $\S(A,f+g)$ both.
    \item \emph{(inverting in the analytic sense)} For any $R$-algebra $A$ of finite type, and any element $f\in A$, the open subset $\S(A[1/f],1/f)$ of $\S(A[1/f])$ is even open in $\S(A)$. We let $D^\an(f)$ denote the open subset $\S(A[1/f],1/f)$ of $\S(A)$.
    \item \emph{(analytic decomposition)} For any $R$-algebra $A$ of finite type, and any element $f\in A$, we have $\S(A)=\S(A,f)\bigcup D^\an(f)$.
    \item For any $R$-algebra $A$ of finite type, and any elements $f,g\in A$, $D^\an(f+g)$ is contained in $D^\an(f)\bigcup D^\an(g)$.
\end{enumerate}
By {\cite[Lemma 6.11]{CC}}, for any $R$-algebra $A$ and any integral closure $A^+$ of a finitely generated $R$-subalgebra of $A$, we let $\S(A,A^+)$ denote the open subset of $\S(A)$ defined by the intersection of $\S(A,f_i)$'s for any (finite) generators $\{f_i\}$.
\exend
\end{dfn}

\begin{rmk}[structure sheaves]\label{rmk:structure sheaves}
There is a canonical structure sheaf $\O$ (resp. $\O^\an$) defined by a structure sheaf in the sense of \cref{const:structure sheaves} on $\S(R[T])$ (resp. $(\A^1)^\an$). This construction also defines some sheaves $\O_A$ (resp. $\O_A^+$, resp. $\O_A^\an$) on $\S(A)$ (resp. $\S(A,A^+)$, resp. $\S(A,A)$) for any $R$-algebra $A$. Furthermore, if we work in categorified locales, the same construction holds (see \cref{dfn:categorical locales}).
\exend
\end{rmk}

We also recall the notion of discrete Huber pairs.

\begin{dfn}[discrete Huber pairs]
Let $(A,A^+)$ be a \emph{discrete Huber pair} (i.e., a pair where $A$ is a ring and $A^+$ is integrally closed subring of $A$). We define the following notions:
\begin{enumerate}
    \item ({\cite[Section 2, p.461]{H}}) For a totally ordered abelian group $\Gamma$, a \emph{valuation} of $A$ with values in $\Gamma\cup\{0\}$ is a map $v:A\to\Gamma\cup\{0\}$ satisfying the following conditions:
    \begin{itemize}
        \item For any elements $a,b\in A$, we have $v(a+b)\leq\max\{v(a),v(b)\}$.
        \item For any elements $a,b\in A$, we have $v(ab)=v(a)v(b)$.
        \item We have $v(0)=0$ and $v(1)=1$.
    \end{itemize}
    We let $\mathrm{Val}(A)$ denote the set of valuations of $A$.
    \item ({\cite[Section 2, p.461]{H}}) Two valuations $v,w$ of $A$ are \emph{equivalent} if they satisfy the following equivalent conditions:
    \begin{itemize}
        \item There is an isomorphism $\Gamma_v\cup\{0\}\xrightarrow{\sim}\Gamma_w\cup\{0\}$ of ordered monoids which is compatible with $v,w$.
        \item The support of $v$ and the valuation ring of $v$ coincide with those of $w$.
        \item For any elements $a,b\in A$, $v(a)\geq v(b)$ is equivalent to $w(a)\geq w(b)$.
    \end{itemize}
    We write $v\sim w$ when $v,w$ are equivalent.
    \item ({\cite[Section 3, p.465,467]{H}}) The set $\Spa(A,A^+)$ is defined as
    \[
    \{v\in\mathrm{Val}(A)\vert v(A^+)\leq1\}/\sim.
    \]
    Moreover, we equip $\Spa(A,A^+)$ with the topology with quasi-compact open basis given by the \emph{rational open subsets}
    \[
    U\left(\dfrac{f_1,\ldots,f_n}{g}\right):=\{v\in\Spa(A,A^+)\vert v(f_i)\leq v(g)\neq0\quad\text{for all $i$}\}.
    \]
    Note that $U\left(\dfrac{f_1,\ldots,f_n}{g}\right)$ is homeomorphic to $\Spa(A[1/g],\widetilde{A^+[f_1/g,\ldots,f_n/g]})$ where $\widetilde{A^+[f_1/g,\ldots,f_n/g]}$ is the integral closure of $A^+[f_1/g,\ldots,f_n/g]$ in $A[1/g]$.
\end{enumerate}
By {\cite[Lemma 6.12]{CC}}, the poset of rational localizations is equivalent to the opposite of the category of \emph{finitary localizations} in the sense of {\cite[Lecture V\hspace{-1pt}I]{CC}}, whose Grothendieck topology is generated by covers of the following type:
\begin{enumerate}
    \item For a finitary localization $(B,B^+)$ of $(A,A^+)$, and an element $f\in B$, there are two covers $(B,B^+)\to(B,\widetilde{B^+[f]}),(B[1/f],\widetilde{B^+[1/f]})$.
    \item For a finitary localization $(B,B^+)$ of $(A,A^+)$, and elements $f_1,\ldots,f_n\in B$ generating the unit ideal, there is a cover $\{(B,B^+)\to(B[1/f_i],\widetilde{B^+[1/f_i]})\}_{i=1}^n$.
\end{enumerate}
Note that the assignment $A\longmapsto(A,A^+)$ relates the (classical) Zariski covers of $A$ with the rational open covers of $\Spa(A,A^+)$.
\exend
\end{dfn}

To summarize the two notions, noting \cref{rmk:open/closed descent}, we can define the derived categories satisfying descent.

\begin{pro}[{\cite[Proposition 6.13 and Corollary 6.14]{CC}}, descent]
If our triple $(R,C,(\A^1)^\an)$ is a GAGA setup, then the functor $(A,A^+)\longmapsto C(A,A^+)$ from discrete Huber pairs of finite type over $(R,R)$ to $\mathrm{Sym}$ defines a map of sites from the topology given by finitary localizations to the topology given by open covers in the sense of \cref{dfn:open/closed inclusions}, where $C(A,A^+)$ denotes the corresponding localizations of $C(A)$ by $\S(A,A^+)\subset\S(A)$.

In particular, for any discrete Huber pair $(A,A^+)$ of finite type over $(R,R)$, we can define a unique sheaf $C(-)$ on $\Spa(A,A^+)$ with values in $\mathrm{Sym}$, satisfying
\[
C\left(U\left(\dfrac{f_1,\ldots,f_n}{g}\right)\right)=C\left(A\left[\dfrac{1}{g}\right],\widetilde{A^+\left[\dfrac{f_1}{g},\ldots,\dfrac{f_n}{g}\right]}\right).
\]
Moreover, we can globalize this construction to $R$-schemes of finite type.
\end{pro}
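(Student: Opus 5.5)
This result is the Clausen--Scholze descent statement, so my plan is to reduce it to the formal descent of \cref{rmk:open/closed descent}. Then I check that the GAGA axioms make the Huber-pair covers go to open covers in $\mathrm{Sym}^\op$.

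\emph{Step 1: well-definedness of the functor.} For a discrete Huber pair $(A,A^+)$ of finite type over $(R,R)$, choose finitely many generators $f_1,\ldots,f_n$ of $A^+$ as an integrally closed $R$-algebra. Set $\S(A,A^+)=\bigcap_i\S(A,f_i)$ and $C(A,A^+):=C(A)(\S(A,A^+))$, the localization of \cref{const:structure sheaves}. Independence of the generators comes from axioms (1) and (2): sums, products and constants of analytic functions remain analytic, so the open set is determined by the $R$-subalgebra generated. Integral closure adds nothing, because if $g$ is integral over $R[f_1,\ldots,f_n]$ then $\S(A,g)$ already contains $\bigcap_i\S(A,f_i)$; this is the content of \cite[Lemma 6.11]{CC}, which I will cite.

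Functoriality along a finitary localization $(A,A^+)\to(B,B^+)$ is handled as follows. Rational localizations are composites of the two basic moves in the definition of discrete Huber pairs. The move $(B,B^+)\to(B,\widetilde{B^+[f]})$ is restriction to the open subset $\S(B,B^+)\cap\S(B,f)$. The move $B\to B[1/f]$ is restriction to $D^\an(f)$, which by axiom (3) is open in $\S(B)$. Hence every such map is an open inclusion in the sense of \cref{dfn:open/closed inclusions}.

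\emph{Step 2: covers go to covers.} This is the crux. For the first type of cover I must show $\S(B,B^+)=(\S(B,B^+)\cap\S(B,f))\cup(\S(B,B^+)\cap D^\an(f))$, which is axiom (4) intersected with $\S(B,B^+)$.

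For the second type, with $f_1,\ldots,f_n$ generating the unit ideal, I need $\S(B)=\bigcup_i D^\an(f_i)$. I would write $1=\sum g_if_i$ and use axiom (1) to get $D^\an(1)=\S(B)$. Axiom (5) then gives $\S(B)\subset\bigcup_i D^\an(g_if_i)$. The remaining step is $D^\an(gf)\subset D^\an(f)$. For this I would use that $1/f=g\cdot(1/(gf))$ on $B[1/gf]$, so axiom (2) gives $\S(B[1/gf],1/gf)\subset\S(B[1/gf],1/f)$. This open set sits inside the preimage of $D^\an(f)$, since $B[1/f]\to B[1/gf]$ is a Zariski localization; combined with openness from (3), the inclusion follows. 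I expect this bookkeeping, especially keeping track of which locale each open lives in, to be the main obstacle.

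\emph{Step 3: conclusion.} Once covers map to open covers, \cref{rmk:open/closed descent}(1) says that $\mathrm{Sym}$-valued presheaves built from open inclusions satisfy descent. Hence $C(-)$ extends uniquely to a sheaf on the site of rational opens of $\Spa(A,A^+)$ with the displayed values, using \cite[Lemma 6.12]{CC} to identify rational opens with finitary localizations. Globalization to finite type $R$-schemes then follows from Zariski descent, since Zariski covers are instances of the second type of cover. This gluing step is formal.
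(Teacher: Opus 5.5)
The paper gives no argument of its own here; it only cites \cite[Proposition 6.13 and Corollary 6.14]{CC}. Your outline is the expected one: finitary localizations go to open inclusions, generating covers go to covers, then formal descent. Step 1 (for finitary localizations) and the type (1) covers are fine. The genuine gap is the inclusion $D^{\an}(gf)\subset D^{\an}(f)$ in the type (2) step. Axiom (2) only controls the locus where \emph{both} factors are analytic, so what $1/f=g\cdot(1/gf)$ actually gives is
\[
D^{\an}(gf)\cap\S(B,g)=\S(B[1/gf],1/gf)\cap\S(B[1/gf],g)\subset\S(B[1/gf],1/f)\subset D^{\an}(f),
\]
and the factor $\S(B,g)$ cannot be dropped. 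In the valuative picture behind \cite[Lemma 6.12]{CC}, $D^{\an}(f)\cap\S(B,B^+)$ corresponds to $\{v(f)\geq1\}$, and $\{v(gf)\geq1\}\not\subset\{v(f)\geq1\}$ whenever $v(g)>1$. So your intermediate claim $\S(B)=\bigcup_iD^{\an}(f_i)$ is unjustified. For general $B^+$ the type (2) family is not even a cover. Take $R$ a field, $B=R[x,y]$, $B^+=R$, $f_1=x$, $f_2=1-xy$, so that $y\cdot x+(1-xy)=1$. Consider the monomial valuation in the coordinates $x$ and $w=1-xy$ with $v(x)=v(w)=\epsilon<1$. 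Then $v(y)=\epsilon^{-1}$, the valuation lies in $\Spa(B,B^+)$, and it lies in neither $\{v(f_1)\geq1\}$ nor $\{v(f_2)\geq1\}$.

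Your argument does work when the coefficients can be taken in $B^+$, i.e.\ when $1\in\sum_iB^+f_i$, in particular for $B^+=B$. Then $\S(B,B^+)\subset\S(B,g_i)$ by \cite[Lemma 6.11]{CC}, and axioms (1), (5) together with the displayed inclusion give $\S(B,B^+)=\bigcup_i\bigl(\S(B,B^+)\cap D^{\an}(f_i)\bigr)$. So the type (2) covers have to be restricted, or treated separately, before descent can be invoked.

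The same point affects your globalization step, since Zariski covers are type (2) covers only on the analytic side $(A,A)$. For $C^{\alg}$, where $A^+=\widetilde{R}$, the maps $(A,\widetilde{R})\to(A[1/f],\widetilde{R})$ are not finitary localizations. They correspond to the \emph{closed} subsets $\S(A[1/f])\subset\S(A)$, so that gluing needs descent along finite closed covers rather than your type (2) argument.

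Relatedly, not every rational localization of a discrete Huber pair is a composite of your two basic moves. For example $\{v(g)\neq0\}$ is rational but again lands in a closed subset. Only the finitary ones are such composites, and that is the class you should work with.
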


\begin{dfn}[analytic/algebraic derived category]
Assume that our triple $(R,C,(\A^1)^\an)$ is a GAGA setup. Let $A$ be an $R$-algebra of finite type. We let $C^\an(A)$ (resp. $C^\alg(A)$) denote $C(A,A)$ (resp. $C(A)$). Also, let $X$ be an $R$-scheme of finite type, we define $C^\an(X)$ and $C^\alg(X)$ similarly.
\exend
\end{dfn}

Finally, by summarizing the discussion above, we can obtain the statement of abstract GAGA.

\begin{thm}[{\cite[Theorem 7.3]{CC}}, abstract GAGA]\label{thm:abstract GAGA}
Assume that our triple $(R,C,(\A^1)^\an)$ is a GAGA setup. For a separated $R$-scheme $X$ of finite type, there is an open immersion $(X^\ad,C^\an(X))\hookrightarrow(X^{\ad/R},C^\alg(X))$ of categorified locales. Moreover, if $X$ is proper, then this map is an isomorphism, in particular, we have an equivalence $C^\an(X)\xrightarrow{\sim}C^\alg(X)$ of presentably closed symmetric monoidal $C$-linear stable $\infty$-categories.
\end{thm}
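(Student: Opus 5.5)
The statement is the abstract GAGA theorem of \cite[Theorem 7.3]{CC}, restated in the present setup. My plan is to reproduce its proof in three steps, using only the axioms of a GAGA setup and the descent proposition above.

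\textbf{Step 1: the affine case.} Take $X=\Spec(A)$ with $A$ of finite type over $R$, and choose generators $f_1,\ldots,f_n$ of $A$. By \cite[Lemma 6.11]{CC}, $\S(A,A)=\bigcap_i\S(A,f_i)$ is an open subset of $\S(A)$, and it is independent of the choice of generators. This uses axioms (1) and (2): constants, sums and products of analytic functions stay analytic, so any other generator, being a polynomial in the $f_i$, is analytic on this intersection. Hence $C^\an(A)=C(A,A)$ is the open localization of $C^\alg(A)=C(A)$ along $\S(A,A)\subset\S(A)$. On the level of locales, $\Spa(A,A)\subset\Spa(A,R)=X^{\ad/R}$ is the open subset where all $|f_i|\leq1$. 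This gives an open immersion of categorified locales $(\Spa(A,A),C^\an(A))\hookrightarrow(\Spa(A,R),C^\alg(A))$. Functoriality and compatibility with rational localizations follow from the descent proposition, which uses axioms (3)–(5) to show that finitary localizations map to open covers.

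\textbf{Step 2: globalization.} For separated $X$, glue the affine open immersions along Zariski covers. This is possible by the sheaf property of $U\mapsto C(U)$ (\cref{rmk:open/closed descent}) and the fact that Zariski covers become rational covers. The result is an open immersion $(X^\ad,C^\an(X))\hookrightarrow(X^{\ad/R},C^\alg(X))$. Separatedness is needed so that $X^\ad$ sits inside $X^{\ad/R}$ as an open subset and intersections of affines remain affine.

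\textbf{Step 3: properness gives surjectivity (the main obstacle).} When $X$ is proper, we need the open immersion to be all of $X^{\ad/R}$. The plan is to apply the valuative criterion of properness to the points of $X^{\ad/R}$. A point of $X^{\ad/R}$ is a valuation $v$ on some $\O_X(U)$ that is $\leq1$ on $R$. Properness extends the map $\Spec K\to X$ to $\Spec V\to X$, where $V$ is the valuation ring of $v$. Consequently $v$ centers on an affine open $\Spec(B)$ with $v(B)\leq1$, that is, the point lies in $\Spa(B,B)\subset X^\ad$. Hence $X^\ad=X^{\ad/R}$ as sets of points.

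The delicate part is upgrading this pointwise statement to an equality of opens in the locale, because the locale $\S$ need not be spatial. To handle it, I would use quasi-compactness of $X^{\ad/R}$ and reduce to finitely many rational covers. Each such cover, built from the cover types (1) and (2) of finitary localizations, is mapped to an honest open cover by the descent proposition. Since the open subset is then a union of members of a cover of the whole space, it is everything. The equivalence $C^\an(X)\xrightarrow{\sim}C^\alg(X)$ follows, because an open immersion that is surjective on locales is an isomorphism, and $C(-)$ is a sheaf.
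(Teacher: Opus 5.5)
Your proposal is correct and follows the same route as the paper's proof: identify $\S(A,A)$ as the preimage of $\Spa(A,A)\subset\Spa(A,R)$ in the affine case, glue via Zariski descent using separatedness, and conclude with the valuative criterion of properness. The ``delicate'' upgrade in Step 3 is harmless but unnecessary. $X^{\ad/R}$ is an honest spectral space, so an open subset containing every point is the whole space. The preimage of the top element under $\S(C^\alg(X))\to X^{\ad/R}$ is then automatically all of $\S(C^\alg(X))$, whether or not $\S$ is spatial.
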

\begin{proof}
By construction, the subset $\{|f|\ll1\}$ in $\Spa(A,R)'$ means $\{|f|<\infty\}$ in $\S(A)$. Therefore, $\S(A,A^+)$ is the preimage of $\Spa(A,A^+)'\subset\Spa(A,R)'$, which implies the first statement. Via Zariski descent, the second statement holds. Finally, the third statement follows from the valuative criterion for properness.
\end{proof}

\begin{rmk}[realization for the algebraic sides]\label{rmk:realization for the algebraic side}
Note that there is a canonical isomorphism $\S(\D(A))\xrightarrow{\sim}\Spec(A)^\op$ where $\Spec(A)^\op$ denotes the topological space $\Spec(A)$ equipped with the constructible topology given by the order of specializations reversed, for all $R$-algebras $A$ of finite type (see {\cite[Lecture V, Exercise 2]{CC}} or {\cite{N}}). Therefore, assuming that we are in a GAGA setup $(R,C,(\A^1)^\an)$, via globalizing, we obtain a map $(X^\op,C^\alg(A))\to(X^{\ad/R},C^\alg(A))$ of categorified locales for all $R$-schemes of finite type.
\exend
\end{rmk}

\end{document}